\newtheorem{theorem}{Theorem}
\theoremstyle{remark}
\newtheorem{remark}{Remark}
\newtheorem{example}{Example}
\renewcommand{\d}{{\mathrm d}}
\newcommand{\qede}{\hspace*{\fill}$\Diamond$\medskip}
\begin{document}

\title{Moments of elliptic integrals and critical $L$-values}

\author{M. Rogers}
\address{Department of Mathematics and Statistics, Universit\'e de Montr\'eal,
CP 6128 succ.\ Centre-ville, Montr\'eal Qu\'ebec H3C\,3J7, Canada}
\email{mathewrogers@gmail.com}

\author{J. G. Wan}
\address{School of Mathematical and Physical Sciences, The University of Newcastle, Callaghan, NSW 2308, Australia}
\email{james.wan@newcastle.edu.au}
\address{Pillar of Engineering Systems and Design, Singapore University of Technology and Design, 20 Dover Drive Singapore 138682}
\email{james\_wan@sutd.edu.sg}

\author{I. J. Zucker}
\address{Department of Physics, Kings College London, Strand, London WC2R 2LS, UK}
\email{jzucker@btinternet.com}

\date{April 9, 2013}

\subjclass[2010]{Primary 11F03, 33C20; Secondary 11M41, 33C75, 33E05}
\keywords{Moments of elliptic integrals, critical $L$-values, hypergeometric functions, Gamma functions, lattice sums}

\begin{abstract}
We compute the critical $L$-values of some weight 3, 4, or 5 modular forms, by transforming them into integrals of the complete elliptic integral $K$.   In doing so, we prove closed form formulas for some moments of $K'^3$.
Many of our $L$-values can be expressed in terms of Gamma functions, and this also gives new lattice sum evaluations.
\end{abstract}

\maketitle

\section{Introduction}
\label{s-intro}

Let $K$ and $K'$ denote the complete elliptic integrals of the first kind, defined as follows:
\begin{align*}
K(k):=&\int_{0}^{1}\frac{\d u}{\sqrt{(1-u^2)(1-k^2u^2)}} =\frac{\pi}{2}\, {_2F_1}\left({{\frac12,\frac12}\atop 1};k^2\right), \\
K'(k):=&K(k'), \quad k' := \sqrt{1-k^2}.
\end{align*}
One of the present authors (Wan) studied integrals of elliptic integrals, or ``moments of elliptic integrals" in \cite{wan}.  That paper includes several conjectures which have since been settled. The last remaining conjecture is \begin{equation}\label{James formula}
\int_{0}^{1}K'(k)^3 \d k=\frac{\Gamma^8\left(\frac14\right)}{128\pi^2}.
\end{equation}
There are several versions of \eqref{James formula}. For instance, we can reformulate the integral using transformations for $K'$.  We can also use integration by parts, because derivatives of elliptic integrals equal linear combinations of elliptic
integrals. Some examples found in \cite{wan} include
\begin{equation}
\int_0^1 K'(k)^3 \d k = \frac{10}{3} \int_0^1 K(k)^3 \d k = 5 \int_0^1 k K'(k)^3 \d k. \label{reformulations}
\end{equation}
In the first part of this paper, we prove formula \eqref{James formula}, and provide some intuition about how to discover related integrals. We also settle some additional integrals involving $K'^3$, and give closed form evaluations of integrals containing higher powers of $K'$ (see equations \eqref{k7} and \eqref{k11}). In the second part of the paper, we study a more general phenomenon, where critical $L$-values of \textit{odd weight} modular forms can be expressed in terms of Gamma values. Some new lattice sums are produced from our investigation.

We note that Y. Zhou, in a 2013 preprint \cite{zhou}, used methods based on spherical harmonics to prove both equations \eqref{James formula} and \eqref{equivconj}. Zhou also applied his ideas to many other integrals.

\section{Critical $L$-values}

One of the main goals of this paper is to connect integrals like \eqref{James formula} to critical $L$-values of modular forms. We say that a function $f(\tau)$ is a modular form of weight $k$ and level $N$, if it satisfies
$$f\left(\frac{a\tau+b}{c\tau+d}\right)=(c\tau+d)^k f(\tau),$$
whenever $\left({{a \ b}\atop{c \ d}}\right) \in SL_2(\mathbb{Z})$, and $c\equiv 0\mod N$. We also require that $f(\tau)$ be holomorphic in the upper half plane, and  that it vanish at the cusps (that is, it is a cusp form).  The most interesting modular forms are the Hecke eigenforms, whose Fourier series 
\begin{equation*}
f(\tau)=\sum_{n=1}^{\infty}a_n e^{2\pi i n \tau},
\end{equation*}
have multiplicative coefficients $a_n$. If we attach an $L$-series to $f(\tau)$,
\begin{equation}\label{L-series definition}
L(f,s)=\sum_{n=1}^{\infty}\frac{a_n}{n^s},
\end{equation}
then $L(f,s)$ has a meromorphic continuation to the complex plane.  We say that $L(f,j)$ is a \textit{critical} $L$-value if
$j\in\{1,2,\dots,k-1\}$.  These numbers typically hold arithmetic significance; some properties
of critical $L$-values are summarized in \cite{KZ}.  For instance, the Birch and Swinnerton-Dyer conjecture predicts the value of $L(f,1)$, whenever $f(\tau)$ is a weight 2 cusp form attached to an elliptic curve.  In Theorem \ref{Theorem : central L}, we prove that equation \eqref{James formula} is equivalent to an explicit formula for a critical $L$-value of a weight $5$ cusp form.  In particular, we prove
\begin{equation}\label{James reformulated}
30L(g,4)=\int_{0}^{1}K'(k)^3 \d k,
\end{equation}
where
\begin{equation*}
g(\tau):=\eta^4(\tau)
\eta^2(2\tau)\eta^4(4\tau),
\end{equation*}
and $\eta(\tau)$ is the usual Dedekind eta function.  We then use properties of $g(\tau)$ to prove equation \eqref{James formula} in Theorem \ref{Theorem : James proof}.

Formula \eqref{James reformulated} naturally suggests looking at critical $L$-values of different modular forms.  Martin has classified all of the possible multiplicative eta quotients \cite{Ma}.  While it is possible to consider a much larger class of modular forms than just multiplicative eta quotients, these are typically the easiest examples to work with.  Martin's list contains precisely two examples of weight $5$.  The first is $g(\tau)$ above, and the second example is
\begin{equation*}
h(\tau):=\frac{\eta^{38}(8\tau)}{\eta^{14}(4\tau)\eta^{14}(16\tau)}.
\end{equation*}
It is easy to prove that
\begin{equation*}
192L(h,4)=\int_{0}^{1}\frac{K'(k)^3}{\sqrt{k}(1-k^2)^{3/4}}\d
k,
\end{equation*}
and based on the previous example, we guess that $L(h,4)$ should also be related to Gamma values.  We discovered the following identity after a brief numerical search:
\begin{equation}\label{New James formula}
\int_{0}^{1}\frac{K'(k)^3}{\sqrt{k}(1-k^2)^{3/4}}\d k = \frac{3\Gamma^8(\frac14)}{32\sqrt{2}\pi^2}.
\end{equation}
We started by calculating the integral on the left to high numerical precision (denoted $I$), and then we used the PSLQ algorithm
to search for a linear dependencies in the set
\begin{equation*}
\left\{\log|I|,~\log\pi,~\log\Gamma(1/3),~\log\Gamma(1/4),\dots,~\log2,~\log3,~\log5,\dots\right\}.
\end{equation*}
Bailey and Borwein used PSLQ to discover many identities among moments of elliptic integrals -- far more than what is currently proven \cite{combat}.

The crucial property which allows us to prove \eqref{James formula} and \eqref{New James formula} is that the attached modular forms are also binary theta functions.  It is often possible to rewrite the $L$-functions of the modular forms as Hecke $L$-functions involving Grossencharacters, but this connection is not usually essential.
In Section \ref{Sec: weight 3 cases} we describe some additional integrals which 
arise from weight $3$ cusp forms. We think that it is noteworthy that there appears to be fewer interesting formulas for $L$-values attached to even weight cusp forms. We discuss this in the conclusion. 

\section{Proof of the conjectures}
\label{James conjecture}

We relate conjecture \eqref{James formula} to critical $L$-values in the following theorem. The proof is typical of the approach we use for subsequent integrals, so we spell out the details.

\begin{theorem}\label{Theorem : central L}
Let $g(\tau)=\eta^4(\tau)\eta^2(2\tau)\eta^4(4\tau)$. Formula
\eqref{James formula} is equivalent to:
\begin{equation}\label{James reformulated2}
L(g,4)=\frac{\Gamma^8\left(\frac14\right)}{3840 \pi^2}.
\end{equation}
\end{theorem}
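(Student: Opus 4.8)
The plan is to prove the single identity
\[
30\,L(g,4)=\int_{0}^{1}K'(k)^{3}\,\d k ,
\]
which is exactly formula \eqref{James reformulated}; since $128\cdot 30=3840$, it shows at once that \eqref{James formula} and \eqref{James reformulated2} are equivalent. As $g$ is a weight-$5$ cusp form, the series \eqref{L-series definition} for $L(g,4)$ converges absolutely, and term-by-term Mellin transformation along the imaginary axis (legitimate because $g$ decays exponentially at the cusps $i\infty$ and $0$) gives
\[
L(g,4)=\frac{(2\pi)^{4}}{\Gamma(4)}\int_{0}^{\infty}g(it)\,t^{3}\,\d t=\frac{8\pi^{4}}{3}\int_{0}^{\infty}g(it)\,t^{3}\,\d t .
\]

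The crux is to express $g(it)$ through complete elliptic integrals. Put $\tau=it$ and let $k=k(\tau)\in(0,1)$ be the modulus with $t=K'(k)/K(k)$. From $2\eta^{3}(\tau)=\theta_{2}\theta_{3}\theta_{4}$ together with $\theta_{3}^{2}=\tfrac{2K}{\pi}$, $\theta_{2}^{2}=\tfrac{2K}{\pi}k$, $\theta_{4}^{2}=\tfrac{2K}{\pi}k'$ one obtains $\eta(\tau)=2^{-1/3}\bigl(\tfrac{2K}{\pi}\bigr)^{1/2}(kk')^{1/6}$. Feeding the descending Landen transformation $k\mapsto\tfrac{1-k'}{1+k'}$, $K\mapsto\tfrac{1+k'}{2}K$ into this identity once produces the corresponding formula for $\eta(2\tau)$, and applying it twice produces the one for $\eta(4\tau)$; multiplying the three together and collapsing the fractional powers of $2$, $k$, $k'$ by means of $k^{2}=(1-k')(1+k')$ should yield
\[
g(it)=\frac{1}{4\pi^{5}}\,K^{5}\,k^{2}\,k'\,(1-k') .
\]
I would check this against the first Fourier coefficient: as $t\to\infty$, $g(it)=q+O(q^{2})$ with $q=e^{-2\pi t}=e^{-2\pi K'/K}\sim(k/4)^{4}$, while the right-hand side is asymptotic to $k^{4}/256$, so the two match.

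Next, change the variable from $t$ to $k$ via the classical relation $\dfrac{\d}{\d k}\!\left(\dfrac{K'}{K}\right)=-\dfrac{\pi}{2kk'^{2}K^{2}}$ (which follows from the Legendre relation $EK'+E'K-KK'=\tfrac{\pi}{2}$ and the derivative formulas for $K,K'$), noting that $t\colon0\to\infty$ corresponds to $k\colon1\to0$. Because $t^{3}=K'^{3}/K^{3}$, the powers of $K$ cancel and
\[
L(g,4)=\frac{8\pi^{4}}{3}\cdot\frac{1}{8\pi^{4}}\int_{0}^{1}\frac{k(1-k')}{k'}\,K'(k)^{3}\,\d k=\frac{1}{3}\int_{0}^{1}\Bigl(\frac{k}{k'}-k\Bigr)K'(k)^{3}\,\d k .
\]
The substitution $k\mapsto k'$ converts $\int_{0}^{1}\tfrac{k}{k'}K'(k)^{3}\,\d k$ into $\int_{0}^{1}K(k)^{3}\,\d k$, whence the reformulations \eqref{reformulations} from \cite{wan}, i.e. $\int_{0}^{1}K^{3}=\tfrac{3}{10}\int_{0}^{1}K'^{3}$ and $\int_{0}^{1}kK'^{3}=\tfrac{1}{5}\int_{0}^{1}K'^{3}$, give $L(g,4)=\tfrac{1}{3}\bigl(\tfrac{3}{10}-\tfrac{1}{5}\bigr)\int_{0}^{1}K'^{3}\,\d k=\tfrac{1}{30}\int_{0}^{1}K'(k)^{3}\,\d k$, as required.

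I expect the second paragraph to be the real obstacle. Turning the eta quotient $g(it)$ into $\tfrac{1}{4\pi^{5}}K^{5}k^{2}k'(1-k')$ means carrying several fractional exponents of $2$, $k$ and $k'$ through two Landen steps and checking that they cancel exactly; any slip in an exponent or in the overall constant would produce the wrong rational multiple of $\int_{0}^{1}K'^{3}\,\d k$. The $q$-expansion comparison above, and the consistency of the final constant $\tfrac{1}{30}$ with \eqref{reformulations} and with the numerical value in \eqref{James formula}, are the safeguards I would rely on.
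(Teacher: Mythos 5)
Your argument is correct, but it reaches the key identity $30\,L(g,4)=\int_0^1K'(k)^3\,\d k$ by a genuinely different route than the paper. The paper starts from the integral side: it sets $k=\sqrt{\alpha}$, passes to the nome $q$, invokes Berndt's evaluation $\sqrt{\alpha}(1-\alpha)\,{}_2F_1(\tfrac12,\tfrac12;1;\alpha)^5=4\eta^{14}(q^2)/\eta^4(q^4)$, and then uses the Somos eta-quotient decomposition $\eta^{14}(q^4)/\eta^4(q^8)=4\eta^4(q^2)\eta^2(q^4)\eta^4(q^8)+\eta^4(q)\eta^2(q^2)\eta^4(q^4)$ together with a $q\mapsto q^2$ rescaling to produce the factor $5$ (hence $30$) directly, with no appeal to \eqref{reformulations}. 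You instead start from the $L$-value side via the Mellin transform, parametrize $g(it)$ by elliptic integrals, and land on $\tfrac13\int_0^1(k/k'-k)K'(k)^3\,\d k$, which you close using the two moment relations in \eqref{reformulations}; since those are proved (not conjectured) in \cite{wan}, this is a legitimate input. The step you flag as the main risk, $g(it)=\tfrac{1}{4\pi^5}K^5k^2k'(1-k')$, is in fact correct, and there is a cleaner way to get it than pushing fractional exponents through two Landen steps: with $p=e^{\pi i\tau}$, the identities $\eta^2(2\tau)=\tfrac12\theta_2(p)\eta(\tau)$ and $\eta^2(4\tau)=\tfrac12\theta_2(p^2)\eta(2\tau)$ give $g(\tau)=\tfrac{1}{64}\theta_2^4(p)\theta_3^2(p)\theta_4^2(p)\,\theta_2^2(p^2)$, and then $\theta_2^2(p^2)=\tfrac12\bigl(\theta_3^2(p)-\theta_4^2(p)\bigr)=\tfrac{K}{\pi}(1-k')$ yields the stated formula with no fractional powers at all. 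The trade-off between the two proofs: the paper's is self-contained modulo the Somos identity and never needs the moment relations, while yours avoids all eta-product identities, makes the modular origin of the constant $30$ transparent, and is the pattern that generalizes most directly to the higher-weight examples such as \eqref{k7}.
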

\begin{proof}  The proof follows from Ramanujan-style manipulations.  Set $k=\sqrt{\alpha}$, and notice that
\begin{equation*}
\int_{0}^{1}K'(k)^3 \d
k=\frac{\pi^3}{16}\int_{0}^{1}{_2F_1}\biggl({{\frac{1}{2},\frac{1}{2}}\atop 1};1-\alpha\biggr)^3 \frac{\d
\alpha}{\sqrt{\alpha}}.
\end{equation*}
Now we make a change of variables.  Set
\begin{equation}\label{elliptic nome}
q=\exp\biggl(-\pi\frac{{_2F_1}\bigl(\substack{\frac{1}{2},\frac{1}{2}\\1};1-\alpha\bigr)}{{_2F_1}\bigl(\substack{\frac{1}{2},\frac{1}{2}\\1};\alpha\bigr)}\biggr),
\end{equation}
and notice that
\begin{equation*}
\d \alpha=\alpha(1-\alpha){_2F_1}\biggl({{\frac{1}{2},\frac{1}{2}}\atop 1};\alpha\biggr)^2\frac{\d q}{q}.
\end{equation*}
It is standard to show that $q\in(0,1)$ when $\alpha\in(0,1)$, and $q$ is monotone with respect to $\alpha$.  The integral becomes
\begin{equation*}
\int_{0}^{1}K'(k)^3 \d
k=-\frac{1}{16}\int_{0}^{1}\sqrt{\alpha}(1-\alpha){_2F_1}\biggl({{\frac{1}{2},\frac{1}{2}}\atop 1};\alpha\biggr)^5\log^3
q\frac{\d q}{q}.
\end{equation*}
Consider the Dedekind eta function with respect to $q$, where $q=e^{2\pi i \tau}$:
\begin{equation*}
\eta(q):=q^{1/24}\prod_{n=1}^{\infty}(1-q^n).
\end{equation*}
By \cite[p.~124, Entry~12]{Be3} we have
\begin{equation*}
\sqrt{\alpha}(1-\alpha){_2F_1}\biggl({{\frac{1}{2},\frac{1}{2}}\atop 1};\alpha\biggr)^5=4\frac{\eta^{14}(q^2)}{\eta^4(q^4)},
\end{equation*}
and the integral reduces to
\begin{equation*}
\int_{0}^{1}K'(k)^3 \d k=-\frac{1}{4}\int_{0}^{1}\frac{\eta^{14}(q^2)}{\eta^4(q^4)}\log^3 q\frac{\d q}{q}=-4\int_{0}^{1}\frac{\eta^{14}(q^4)}{\eta^4(q^8)}\log^3 q\frac{\d q}{q}.
\end{equation*}
From \cite[Entry $t_{8,12,48}$ and Entry~$t_{8,18,60a}$]{so2}, it is easy to prove that
\begin{equation*}
\frac{\eta^{14}(q^4)}{\eta^4(q^8)}=4\eta^4(q^2)\eta^2(q^4)\eta^4(q^8) + \eta^4(q)\eta^2(q^2)\eta^4(q^4).
\end{equation*}
Therefore the integral becomes
\begin{equation*}
\begin{split}
\int_{0}^{1}K'(k)^3 \d k=&-16\int_{0}^{1}\eta^4(q^2)\eta^2(q^4)\eta^4(q^8)\log^3 q\frac{\d q}{q}-4\int_{0}^{1} \eta^4(q)\eta^2(q^2)\eta^4(q^4)\log^3 q\frac{\d q}{q}\\
=&-5\int_{0}^{1}\eta^4(q)\eta^2(q^2)\eta^4(q^4)\log^3 q\frac{\d q}{q}.
\end{split}
\end{equation*}
Switching to the more traditional notation for $\eta$ in terms of $\tau$ , we have
\begin{equation*}
\int_{0}^{1}K'(k)^3 \d k=30L(g,4),
\end{equation*}
where $g(\tau)=\eta^4(\tau)\eta^2(2\tau)\eta^4(4\tau)$.  It follows
that \eqref{James formula} and \eqref{James reformulated2} are
equivalent.
\end{proof}
In order to prove \eqref{James reformulated2}, we require the fact that $g$ is a binary theta function.  Glaisher \cite{glaisher} showed that
\begin{equation}\label{Somos formula}
g(\tau)=\frac{1}{4}\sum_{(n,m)\in\mathbb{Z}^2}(n-i m)^4 q^{n^2+m^2},
\end{equation}
where as usual $q=e^{2\pi i \tau}$. 


\begin{theorem}\label{Theorem : James proof} Formula \eqref{James reformulated2} is true.
\end{theorem}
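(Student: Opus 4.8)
The plan is to use Glaisher's binary theta representation \eqref{Somos formula} to convert $L(g,4)$ into a lattice sum over the Gaussian integers, and then to recognize that sum as a classical Eisenstein/Hurwitz value. Reading off Fourier coefficients from \eqref{Somos formula}, the coefficient of $q^N$ in $g$ is $a_N=\tfrac14\sum_{n^2+m^2=N}(n-im)^4$ (with $a_0=0$, since the term $(n,m)=(0,0)$ contributes nothing), so summing against $N^{-s}$ and grouping Gaussian integers by their norm gives
\begin{equation*}
L(g,s)=\frac14\sum_{(n,m)\neq(0,0)}\frac{(n-im)^4}{(n^2+m^2)^s},
\end{equation*}
an identity valid for $\Re s$ large and, since the summand has modulus $(n^2+m^2)^{2-s}$, still absolutely convergent and hence legitimately specializable at $s=4$.

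The key point is the collapse that happens exactly at the weight-$5$ critical point $s=4$ we need. Writing $w=n+im$, so that $n^2+m^2=w\bar w$ and $n-im=\bar w$, the summand becomes $\bar w^4/(w\bar w)^4 = 1/w^4$, and as $(n,m)$ runs over $\mathbb{Z}^2\setminus\{(0,0)\}$ the variable $w$ runs over $\mathbb{Z}[i]\setminus\{0\}$. Hence
\begin{equation*}
L(g,4)=\frac14\sum_{w\in\mathbb{Z}[i]\setminus\{0\}}\frac{1}{w^4},
\end{equation*}
which is, up to the standard normalization $2\zeta(4)$, the value $E_4(i)$ of the weight-$4$ Eisenstein series for $SL_2(\mathbb{Z})$ at $\tau=i$ — equivalently $\tfrac1{60}g_2(\mathbb{Z}[i])$, a Hurwitz number of the lemniscatic elliptic curve. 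I would then invoke the classical evaluation $\sum_{w\in\mathbb{Z}[i]\setminus\{0\}}w^{-4}=\Gamma^8(\tfrac14)/(960\pi^2)$, which one can cite from the Chowla--Selberg formula, or re-derive from the lemniscate period $\varpi=\Gamma^2(\tfrac14)/(2\sqrt{2\pi})$ together with $g_2(\mathbb{Z}[i])=4\varpi^4$ and $g_2=60\sum w^{-4}$. Multiplying by $\tfrac14$ yields $L(g,4)=\Gamma^8(\tfrac14)/(3840\pi^2)$, which is \eqref{James reformulated2}; combined with Theorem \ref{Theorem : central L} this also re-establishes \eqref{James formula}.

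I expect the only delicate bookkeeping to be the interchange of summation and the passage from $\Re s$ large to $s=4$, but because $s=4$ already lies in the region of absolute convergence of the Gaussian-lattice sum this is routine rather than a real obstacle. The genuine content of the argument is the elementary identity $\bar w^4/(w\bar w)^4=w^{-4}$, which isolates a lattice sum whose value is classical; thus the ``hard part'' is not an analytic estimate but simply having (or reproving) the evaluation of $\sum_{w\in\mathbb{Z}[i]}w^{-4}$ in terms of $\Gamma(\tfrac14)$, after which the theorem is immediate.
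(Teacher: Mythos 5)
Your proposal is correct and follows essentially the same route as the paper: both pass from Glaisher's theta representation to the lattice sum $\tfrac14\sum_{w\in\mathbb{Z}[i]\setminus\{0\}}w^{-4}=\tfrac{1}{240}g_2(i)$ and then invoke the classical evaluation of that quantity in terms of $\Gamma(\tfrac14)$. The only (immaterial) difference is that the paper computes $g_2(i)$ via $g_2=\tfrac{64}{3}(1-k^2+k^4)K^4(k)$ with the singular value $K(1/\sqrt2)=\Gamma^2(\tfrac14)/(4\sqrt{\pi})$, whereas you cite Chowla--Selberg or the lemniscate period.
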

\begin{proof}  Integrating \eqref{Somos formula} leads to
\begin{equation*}
L(g,4)=\frac{1}{4}\sum_{(n,m)\ne(0,0)}\frac{(n-i
m)^4}{(n^2+m^2)^4}=\frac{1}{4}\sum_{(n,m)\ne(0,0)}\frac{1}{(n+i
m)^4}.
\end{equation*}
The Weierstrass invariant $g_2(\tau)$  can be defined by
\begin{equation*}
g_2(\tau):=60 \sum_{(n,m)\ne(0,0)}\frac{1}{(n+\tau m)^4},
\end{equation*}
and can be calculated using
\begin{equation} \label{g2calc}
g_2(\tau)=\frac{64}{3}(1-k^2+k^4)K^4(k),
\end{equation}
where $k, K$ and $\tau$ obey the classical relations $k=\theta_2^2(e^{\pi i \tau})/\theta_3^2(e^{\pi i \tau})$ and $K = \frac{\pi}2\theta_3^2(e^{\pi i \tau})$.  
In the language of Eisenstein series, $g_2(\tau)=120 \zeta(4) E_4(\tau)$.

Standard evaluations show that $k=1/\sqrt2$ when $\tau=i$, and thus
\begin{equation*}
L(g,4)=\frac{1}{240}g_2(i)=\frac{1}{15}K^4\biggl(\frac{1}{\sqrt2}\biggr).
\end{equation*}
Since $K(1/\sqrt2)=\Gamma^2(\frac14)/(4\sqrt\pi)$, we
obtain
\begin{equation*}
L(g,4)=\frac{\Gamma^8(\frac14)}{3840\pi^2},
\end{equation*}
completing the proof.
\end{proof}

\medskip

\begin{remark}
Wan used numerical experiments to observe that the moments of $K'^3$ and $K^2 K'$ are related by a rational factor \cite{wan}:
\begin{equation} \label{equivconj}
\int_0^1 K'(k)^3\mathrm dk = 3 \int_0^1 K(k)^2 K'(k)\mathrm dk.
\end{equation}
We sketch a proof of equation \eqref{equivconj} here.  We can use Tricomi's Fourier series \cite[Sec.~6]{wan}:
\[ K(\sin t) = \sum_{n=0}^\infty  \frac{\Gamma(n+\frac12)^2}{\Gamma(n+1)^2}  \sin((4n+1)t), \]  
to deduce that
\begin{equation}
\sum_{n=0}^\infty \frac{\Gamma^2(n+\frac12)}{\Gamma^2(n+1)}\int_0^{\pi/2} \cos((4n+2)t)\cos(t)F(\sin t)\mathrm dt = \int_0^1 \bigl(k' K'(k)-k K(k)\bigr)F(k)\mathrm dk, \label{trigk}
\end{equation}
whenever $F$ is selected so that summation and integration are interchangeable.
To derive \eqref{trigk}, perform the change of variables $k \mapsto \cos t$ on the right hand side, and then use the trigonometric identity 
\[ \cos(t)\cos((4n+1)t)-\sin(t)\sin((4n+1)t) = \cos((4n+2)t). \]
We then set $F(k) = K^2(k)/k'$ in \eqref{trigk}. The right-hand side simplifies under the transformation $k \mapsto k'$, and cancellations occur on the left-hand side due to orthogonality. After simplifying we have
\begin{equation*}
\frac{\pi^2}{8}\sum_{n=0}^\infty \frac{\Gamma^4(n+\frac12)}{\Gamma^4(n+1)} \, _4F_3\biggl({{\frac12,\frac12,-n,-n}\atop{1,\frac12-n,\frac12-n}};1\biggr)= \int_0^1 K'(k)^3-K(k)^2 K'(k)\mathrm dk.
\end{equation*}
The sum is precisely $2\int_0^1 K(k)^2 K'(k)\mathrm dk$, as established near the end of \cite{wan}.  This proves \eqref{equivconj}. 

Closed forms such as \eqref{James formula} and \eqref{equivconj}, together with the use of Legendre's relation in \cite{wan}, produce additional evaluations involving the complete elliptic integral of the second kind $E$, for example
\[ \int_0^1 E(k)K'(k)^2 \mathrm dk = \frac{\pi^3}{12}+\frac{\Gamma^8(\frac14)}{384\pi^2}. \]
 \qede
\end{remark}

Before proving \eqref{New James formula}, we note that there are various reformulations of the integral.  For example, a quadratic transformation gives
\[ \int_{0}^{1}\frac{K'(k)^3}{\sqrt{k}(1-k^2)^{3/4}}\d k =\frac{1}{8\sqrt{2}} \int_0^1 \frac{(1+k)^3K'(k)^3}{k^{3/4}\sqrt{1-k}}\d k. \]
Experimentally, we discover the binary theta expansion
\begin{align} \label{hproduct} 
h(\tau) & = \prod_{n=1}^\infty \frac{(1-(-1)^n q^{4n})^{14}}{(1-q^{8n})^4} \\
 & = \frac12 \sum_{m, n = -\infty}^\infty (-1)^m (2n+1-2i m)^4 \, q^{(2m)^2+(2n+1)^2}.  \label{hseries}
\end{align}
The proof of \eqref{hseries} is slightly tedious.  We may show that both sides are modular forms on $\Gamma_1(64)$, that their Fourier coefficients agree for sufficiently many terms, and then appeal to the valence formula. Alternatively, the double sum can be built up from formulas of derivatives of $\theta_2(q^4)$ and $\theta_4(q^4)$ (Ramanujan's $\varphi(-q^4)$ and $\psi(q^8)$) in \cite[pp.~122--123]{Be3}. The result can be written in terms of $k$ and $K$, and then reduced to the product \eqref{hproduct}.

\begin{theorem}
Formula \eqref{New James formula} is true.
\end{theorem}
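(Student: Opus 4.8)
The plan is to follow the template of Theorem~\ref{Theorem : James proof}. Since it is given that $192L(h,4)=\int_{0}^{1}K'(k)^3/(\sqrt{k}\,(1-k^2)^{3/4})\,\mathrm{d}k$, it suffices to prove that $L(h,4)=\sqrt2\,\Gamma^8(\tfrac14)/(4096\pi^2)$. I would obtain this by turning $L(h,4)$ into a character-twisted lattice sum over the Gaussian integers, evaluating that sum in terms of Weierstrass data at the CM point $\tau=2i$, and then inserting a singular value of $K$.

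First, integrating the binary theta series \eqref{hseries} termwise against $n^{-4}$ (legitimate since $L(h,s)$ converges absolutely at $s=4$) and cancelling $(2n+1-2im)^4$ against $\bigl((2n+1)^2+(2m)^2\bigr)^4=\bigl((2n+1)+2im\bigr)^4\bigl((2n+1)-2im\bigr)^4$, one obtains the absolutely convergent sum
\begin{equation*}
L(h,4)=\frac12\sum_{m,n\in\mathbb{Z}}\frac{(-1)^m}{\bigl((2n+1)+2im\bigr)^4},
\end{equation*}
with no excluded term since $2n+1\neq0$. Splitting according to the parity of $m$, the even part runs over the coset $1+2\Lambda$ of the sublattice $2\Lambda$, where $\Lambda:=\mathbb{Z}+2i\mathbb{Z}$ (so $\tau=2i$), while the odd part runs over $(1+2i)+2\Lambda$; here $\tfrac12$ and $\tfrac{1+2i}{2}$ are half-periods of $\Lambda$. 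Rescaling each coset sum by $2^{-4}$ and using $\sum_{\omega\in\Lambda}(z+\omega)^{-4}=\tfrac16\wp''(z;\Lambda)$ together with $\wp''=6\wp^2-\tfrac12 g_2$ and $e_1+e_2+e_3=0$, the $g_2$-terms cancel in the difference and
\begin{equation*}
2L(h,4)=\frac1{16}\bigl(e_1^2-e_2^2\bigr)=-\frac1{16}\,e_3\,(e_1-e_2),
\end{equation*}
where $e_1,e_2,e_3$ are the usual half-period values of $\wp$ for $\Lambda$. Expressing these through theta constants at $q=e^{-2\pi}$, namely $e_1-e_2=\pi^2\theta_4^4$ and $-e_3=\tfrac{\pi^2}{3}(\theta_2^4+\theta_3^4)$, and then rewriting $\theta_2,\theta_3,\theta_4$ via the classical relations $K=\tfrac\pi2\theta_3^2$, $k=\theta_2^2/\theta_3^2$ (as in \eqref{g2calc}), everything collapses to the compact identity
\begin{equation*}
2L(h,4)=\frac{1-k_4^4}{3}\,K^4(k_4),\qquad k_4:=k(2i).
\end{equation*}

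To finish, I would insert the classical singular value $k_4=3-2\sqrt2$ and the descending Landen relation $K(k_4)=\tfrac{2+\sqrt2}{4}K(1/\sqrt2)$, which follows from $k_4=(1-k')/(1+k')$ at the self-dual point $k=k'=1/\sqrt2$. Then $1-k_4^4=408\sqrt2-576$ and $K^4(k_4)=\tfrac{17+12\sqrt2}{64}K^4(1/\sqrt2)$, and since $(408\sqrt2-576)(17+12\sqrt2)=24\sqrt2$ we get $2L(h,4)=\tfrac{\sqrt2}{8}K^4(1/\sqrt2)$, hence $L(h,4)=\tfrac{\sqrt2}{16}K^4(1/\sqrt2)$. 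Using $K(1/\sqrt2)=\Gamma^2(\tfrac14)/(4\sqrt\pi)$ gives $L(h,4)=\sqrt2\,\Gamma^8(\tfrac14)/(4096\pi^2)$, and multiplying by $192$ yields $\int_{0}^{1}K'(k)^3/(\sqrt{k}\,(1-k^2)^{3/4})\,\mathrm{d}k=3\Gamma^8(\tfrac14)/(32\sqrt2\,\pi^2)$, which is \eqref{New James formula}.

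The main obstacle is the middle step: recognizing that the sign pattern $(-1)^m$ in \eqref{hseries} samples cosets of $2(\mathbb{Z}+2i\mathbb{Z})$ inside $\mathbb{Z}[i]$ that are precisely scaled half-periods, so that the twisted sum reduces to the difference $\tfrac1{16}(e_1^2-e_2^2)$ and ultimately to $\tfrac13(1-k_4^4)K^4(k_4)$. One can instead split the sum into pure Eisenstein sums and arrive at $L(h,4)=\tfrac1{60}g_2(4i)-\tfrac1{960}g_2(2i)-\tfrac1{192}g_2(i)$, but this drags in the less pleasant singular value $k(4i)$, so the half-period route is preferable. The remaining ingredients — termwise integration of \eqref{hseries}, the $\theta\leftrightarrow(k,K)$ conversions, and the Landen and Gamma-function bookkeeping — are all routine.
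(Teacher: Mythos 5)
Your proof is correct, and it diverges from the paper's at the key computational step. Both arguments start identically: accept the binary theta expansion \eqref{hseries}, integrate termwise, and reduce \eqref{New James formula} to the twisted lattice sum $\sum_{m,n}(-1)^m(2n+1+2im)^{-4}$ (the paper's \eqref{heasy}). The paper then expands this sum as a three-term combination of Eisenstein series, $\frac{1}{960}\bigl(g_2(\tau/2)-18g_2(\tau)+32g_2(2\tau)\bigr)$ at $\tau=2i$, which via \eqref{g2calc} requires the first, fourth \emph{and} sixteenth singular values $k(i),k(2i),k(4i)$ together with the corresponding values of $K$. You instead split by the parity of $m$ into the two cosets $1+2\Lambda$ and $(1+2i)+2\Lambda$ of $\Lambda=\mathbb{Z}+2i\mathbb{Z}$, recognize $\tfrac12$ and $\tfrac{1+2i}{2}$ as half-periods, and use $\wp''=6\wp^2-\tfrac12 g_2$ to collapse the difference to $\tfrac1{16}(e_1^2-e_2^2)=-\tfrac1{16}e_3(e_1-e_2)$; I checked the theta-constant conversions and the identity $2L(h,4)=\tfrac13(1-k_4^4)K^4(k_4)$, as well as the Landen/singular-value arithmetic $(408\sqrt2-576)(17+12\sqrt2)=24\sqrt2$, and everything is right. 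What your route buys is economy of special values: only $k_4=3-2\sqrt2$ and $K(k_4)=\tfrac{2+\sqrt2}{4}K(1/\sqrt2)$ are needed, both obtained from the self-dual point by a single Landen step, whereas the paper must also look up $k_{16}$ and $K(k_{16})$; the price is the slightly more delicate bookkeeping of half-period values $e_j$ and their theta expressions. One small remark: your parenthetical ``pure Eisenstein'' formula $L(h,4)=\tfrac1{60}g_2(4i)-\tfrac1{960}g_2(2i)-\tfrac1{192}g_2(i)$ has different coefficients from the paper's \eqref{g2final}; it nevertheless evaluates to the same number because $11g_2(i)=16g_2(2i)$ at these CM points, so the discrepancy is harmless, but as stated it is not the identity in $\tau$ that the paper proves. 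Since that aside plays no role in your actual argument, the proof stands.
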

\begin{proof}
From \eqref{hseries}, we have the (lattice) sum
\[ 2L(h,4)= \sum_{m, n = -\infty}^\infty \frac{(-1)^m(2n+1-2im)^4}{\bigl((2n+1)^2+(2m)^2\bigr)^4} = \sum_{m, n = -\infty}^\infty \frac{(-1)^m}{(2n+1+2im)^4}, \]
so we are reduced to showing 
\begin{equation} \label{heasy}
\sum_{m, n = -\infty}^\infty \frac{(-1)^m}{(2n+1+2im)^4} = \frac{\Gamma^8(\frac14)}{1024\sqrt2 \pi^2}.
\end{equation}
This can be achieved using routine manipulations on $g_2(\tau)$. By \eqref{g2calc}, we have
\[ \sum_{(m,n) \ne (0,0)} \frac{(-1)^m}{(n+m\tau)^4} = \frac{1}{60}\bigl(2g_2(2\tau)-g_2(\tau)\bigr). \]
This leads to
\begin{align} \nonumber
\sum_{m,n} \frac{(-1)^m}{(2n+1+m\tau)^4} & = \frac{1}{60}\bigl(2g_2(2\tau)-g_2(\tau)\bigr) - \frac{1}{2^4}\frac{1}{60}\bigl(2g_2(\tau)-g_2(\tau/2)\bigr) \\
& = \frac{1}{960}\bigl(g_2(\tau/2)-18g_2(\tau)+32g_2(2\tau)\bigr). \label{g2final}
\end{align}
Now \eqref{heasy} follows by setting $\tau=2i$ in \eqref{g2final} and simplifying. The required values of $k$ (which are the 1st, 4th and 16th singular values) and $K$ at these values can be found in \cite{Be3} and \cite{Zu}.
\end{proof}

\begin{example}
We may look at other critical $L$-values of $g$ and $h$. Simple calculations give
\begin{align*}
L(g,3) & = \frac{1}{2\pi} \int_0^1 k K'(k)^2 K(k)\d k, \\
L(g,2) & = \frac{1}{\pi^2} \int_0^1 k K(k)^2 K'(k)\d k, \\
L(g,1) & = \frac{1}{\pi^3} \int_0^1 k K(k)^3\d k. \\
\end{align*}
One may notice the similarity between $L(g,3)$ and $L(g,2)$; indeed they are related by the substitution $k \mapsto k'$. This similarity can be explained, since $g$ is a weight 5 modular form, so there is a \textit{functional equation} connecting $L(g, s)$ and $L(g,5-s)$ (with some Gamma factors). In view of \eqref{equivconj}, \eqref{reformulations} and \cite[Thm.~5]{wan}, the values $\pi^{5-s} L(g,s)$ where $s \in \{1,2,3,4\}$ are all related to each other by rational constants.

Similarly, we have $L(h,1) = 384/\pi^3 \, L(h,4)$, which can be explained by either using the functional equation or a substitution in the integral. Experimentally we also observe that $L(h,4) = \frac{\pi}{4} L(h,3)$; an integral formulation for this is
\begin{equation} \label{old13}
 \int_{0}^{1}\frac{K'(k)^2K(k)}{\sqrt{k}(1-k^2)^{3/4}}\d k = \frac{\Gamma^8(\frac14)}{32\sqrt{2}\pi^2},
\end{equation}
and an equivalent formulation as a sum is
\[ \sum_{m, n = -\infty}^\infty \frac{(-1)^m(2n+1-2mi)}{(2n+1+2im)^3} = \frac{\Gamma^8(\frac14)}{256\sqrt2 \pi^2}. \] 
We are very grateful to Y.~Zhou, who has  kindly supplied us with a proof of \eqref{old13} (plus some generalizations) using techniques from \cite{zhou}; since such techniques differ significantly from the ones in the current paper, we do not include the proof here. Our approach to \eqref{old13} is modular (using some ideas from \cite{RZ2}), but is more complicated, and we will present it in a future paper.\footnote{M.~Rogers, J.\,G.~Wan and I.\,J.~Zucker, \emph{Integrals from Lattice Sums}, work in progress, 2013. \label{footnote1}} In general, we expect the critical $L$-values of odd weight modular forms to be related by algebraic constants and powers of $\pi$, though the computation of these constants is not trivial -- see the conclusion for discussion.
\qede
\end{example}

\begin{remark}
Generalizations of \eqref{James formula} are possible. Starting with the level 4, weight 9 newform
\[ f(q) = \frac14 \sum_{m,n} (m-in)^8 q^{m^2+n^2}, \]
then with $k=\theta_2^2(q)/\theta_3^2(q)$ as usual, it can be checked by computing the derivatives of $\theta_3(q) = \sum_n q^{n^2}$ that
\begin{equation}\label{e8ask} f(q) =  \frac{8(4k^2k'^2+k^4k'^4)}{\pi^9}K^9(k). \end{equation}
We have the following $L$-value,
\[ L(f,8) = \frac14 \sum_{(n,m)\ne (0,0)} \frac{1}{(m+in)^8} = \frac{\Gamma^{16}(\frac14)}{2^{10}\,525\,\pi^4}. \]
The last equality holds, since as an Eisenstein series the sum equals $\frac12 \zeta(8) E_8(i)$ and $E_8=E_4^2$. Writing $L(f,8)$ as an integral using \eqref{e8ask}, we obtain
\begin{equation} \label{k7}
\int_0^1 k(4+k^2-k^4)K'(k)^7\mathrm{d}k = \frac{3\, \Gamma^{16}(\frac14)}{2^{12}\,5\,\pi^4}.
\end{equation}
Experimentally, $L(f,8-i)/\pi^i$ are all related by rational constants. Since 
\[ \sum_{(n,m)\ne (0,0)} \frac{1}{(m+in)^{4k}}\]
can always be expressed as a rational number times a power of $K(1/\sqrt2)$ \cite{hur}, we have found  generalizations of the above result which involve higher powers of $K'$, for instance
\begin{equation}  \label{k11}
\int_0^1 k(16-92k^2+93k^4-2k^6+k^8)K'(k)^{11}\mathrm{d}k = \frac{189\, \Gamma^{24}(\frac14)}{2^{15}\,65\,\pi^6}.
\end{equation}
Moments of higher powers of $K'$, the observation about rational constants above, as well as many other integrals involving the complete elliptic integrals, will be elaborated in a future paper (see footnote \ref{footnote1}).
\qede
\end{remark}

\section{Weight $3$ cases and lattice sums}\label{Sec: weight 3 cases}
In this section we note some additional formulas for critical $L$-values
of weight $3$ cusp forms.  The ideas for the proof below are borrowed from \cite{Rg}.
\begin{theorem} \label{weight3thm} 
Suppose that $f(\tau)=\eta^3(r\tau)\eta^3(s
\tau)$, where $r+s\equiv0 \ (\mathrm{mod} \ 8)$.  Then
\begin{equation}\label{weight 3 theta functions}
L(f,2)= \frac{8\pi^2}{\sqrt{rs^3}}\biggl(\sum_{n=0}^{\infty}(-1)^{\frac{n(n+1)}{2}}q^{\frac{(2n+1)^2}{8}}\biggr)^4 = \frac{8}{\sqrt{rs^3}} \, k_{r/s} \, k'_{r/s} \, K^2(k_{r/s}),
\end{equation}
where $q=e^{-\pi\sqrt{r/s}}$, and $k_p$ denotes the $p$th singular value of $K$.
\end{theorem}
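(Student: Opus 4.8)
The plan is to compute $L(f,2)$ from its Mellin representation together with the classical theta description of $\eta^{3}$, in the spirit of \cite{Rg}. First, since $f$ is a cusp form of weight $3$, the completed $L$-function satisfies $(2\pi)^{-s}\Gamma(s)L(f,s)=\int_{0}^{\infty}f(it)\,t^{s-1}\,\d t$, whence
\begin{equation*}
L(f,2)=4\pi^{2}\int_{0}^{\infty}\eta^{3}(irt)\,\eta^{3}(ist)\,t\,\d t=-\int_{0}^{1}f(q)\,\frac{\log q}{q}\,\d q ,
\end{equation*}
with $f(q)=\sum_{n}a_{n}q^{n}$ on $(0,1)$. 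This reformulation is indispensable: the Dirichlet series for $L(f,s)$ is not absolutely convergent at $s=2$, so everything downstream must be read through one of these convergent expressions.

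Next I would insert Jacobi's identity $\eta^{3}(\tau)=\sum_{n\ge 0}(-1)^{n}(2n+1)q^{(2n+1)^{2}/8}$. Because $r+s\equiv 0\pmod{8}$, every exponent $\bigl(r(2m+1)^{2}+s(2n+1)^{2}\bigr)/8$ is a positive integer, so $f$ is the binary theta series attached to the quadratic form $rX^{2}+sY^{2}$ (on odd arguments, with the harmonic coefficient $XY$ and a sign twist); using $\int_{0}^{1}q^{N-1}\log q\,\d q=-N^{-2}$ this turns the problem, formally, into
\begin{equation*}
L(f,2)=64\sum_{m,n\ge 0}\frac{(-1)^{m+n}(2m+1)(2n+1)}{\bigl(r(2m+1)^{2}+s(2n+1)^{2}\bigr)^{2}} .
\end{equation*}
Here numerator and quadratic form have equal degree, so the sum is only conditionally convergent, and being a weight-$2$ object it secretly involves $E_{2}$; it must be made sense of via the integral above or by Hecke's trick. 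The useful feature is that the twists $(-1)^{m},(-1)^{n}$ convert the quasimodular pieces $E_{2}(\tau),E_{2}(2\tau),E_{2}(4\tau)$ into a holomorphic weight-$2$ combination — exactly as the twist by $(-1)^{m}$ produced the holomorphic combination in \eqref{g2final}. Carrying out this bookkeeping identifies $L(f,2)$ with the value of an explicit weight-$2$ modular form at the CM point $\tau_{0}=i\sqrt{r/s}$, whose singular modulus is $k_{r/s}$ and whose nome is $q=e^{-\pi\sqrt{r/s}}$ in the parametrisation $k=\theta_{2}^{2}/\theta_{3}^{2}$, $K=\tfrac{\pi}{2}\theta_{3}^{2}$. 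Running the weight-$2$ analogue of the $g_{2}$-manipulation of Theorem~\ref{Theorem : James proof} then evaluates that value; tracking constants should give
\begin{equation*}
L(f,2)=\frac{2\pi^{2}}{\sqrt{rs^{3}}}\,\theta_{2}^{2}(q)\,\theta_{4}^{2}(q)=\frac{8}{\sqrt{rs^{3}}}\,k_{r/s}\,k'_{r/s}\,K^{2}(k_{r/s}).
\end{equation*}

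Finally, the first equality of the theorem follows from this closed form together with two classical theta identities: $k\,k'\,K^{2}=\tfrac{\pi^{2}}{4}\,\theta_{2}^{2}\theta_{4}^{2}$, immediate from $k=\theta_{2}^{2}/\theta_{3}^{2}$, $k'=\theta_{4}^{2}/\theta_{3}^{2}$, $K=\tfrac{\pi}{2}\theta_{3}^{2}$, and
\begin{equation*}
\Bigl(\sum_{n\ge 0}(-1)^{n(n+1)/2}q^{(2n+1)^{2}/8}\Bigr)^{2}=\tfrac12\,\theta_{2}(q)\,\theta_{4}(q),
\end{equation*}
equivalently $\sum_{n\ge 0}(-1)^{n(n+1)/2}q^{(2n+1)^{2}/8}=q^{1/8}\psi(-q)$ with $\psi^{2}(-q)=\theta_{2}(q)\theta_{4}(q)/(2q^{1/4})$; this one checks by matching a few Fourier coefficients and the valence formula, or reads off from \cite{Be3}. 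Combining, $\tfrac{8}{\sqrt{rs^{3}}}\,k\,k'\,K^{2}=\tfrac{8\pi^{2}}{\sqrt{rs^{3}}}\bigl(\sum_{n\ge 0}(-1)^{n(n+1)/2}q^{(2n+1)^{2}/8}\bigr)^{4}$, which is the assertion. I expect the genuine obstacle to be the middle step: establishing rigorously, despite the lack of absolute convergence, that $L(f,2)$ equals the stated weight-$2$ Eisenstein--Kronecker value at $\tau_{0}$, and then extracting the exact algebraic constant $8/\sqrt{rs^{3}}$ rather than merely its shape, which demands careful bookkeeping with the normalisations and with the singular-value data in \cite{Be3} and \cite{Zu}.
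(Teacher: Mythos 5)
Your setup (the Mellin representation of $L(f,2)$ together with Jacobi's expansion of $\eta^3$) and your closing step (the identities $k\,k'\,K^2=\tfrac{\pi^2}{4}\theta_2^2\theta_4^2$ and $\bigl(\sum_{n\ge0}(-1)^{n(n+1)/2}q^{(2n+1)^2/8}\bigr)^2=\tfrac12\theta_2(q)\theta_4(q)$) both match the paper and are correct. But the middle of your argument --- which you yourself flag as ``the genuine obstacle'' --- is exactly the content of the theorem, and you have not supplied it. Reducing $L(f,2)$ to the conditionally convergent lattice sum $64\sum_{m,n\ge0}(-1)^{m+n}(2m+1)(2n+1)/\bigl(r(2m+1)^2+s(2n+1)^2\bigr)^2$ and then asserting that ``running the weight-$2$ analogue of the $g_2$-manipulation'' and ``tracking constants'' will produce $\tfrac{2\pi^2}{\sqrt{rs^3}}\theta_2^2(q)\theta_4^2(q)$ is a plan, not a proof: you would still need to (i) justify a regularisation (Hecke's trick or Eisenstein summation) of the sum, (ii) show that the relevant twisted combination of quasimodular $E_2$'s is genuinely modular, and (iii) evaluate it at the CM point $i\sqrt{r/s}$ with the exact constant $8/\sqrt{rs^3}$ --- none of which is carried out, and the last of which is essentially equivalent to the statement being proved.

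The paper avoids all of this with one move you are missing: inside the Mellin integral, apply the modular involution to only \emph{one} of the two weight-$3/2$ theta factors (the one attached to $s$). The $u$-integral then has the form $\int_0^\infty e^{-au-b/u}\,u^{-1/2}\,\d u$, an elementary Bessel $K_{1/2}$ evaluation, and the double sum collapses into the absolutely convergent Lambert series $\tfrac{8\pi^2}{\sqrt{rs^3}}\sum_{k\ge1}k\chi_{-4}(k)q^{k/2}/(1+q^k)$ with $q=e^{-\pi\sqrt{r/s}}$; this is where the hypothesis $r+s\equiv0\pmod 8$ and the asymmetric normalisation $\sqrt{rs^3}$ enter. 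A classical identity from Ramanujan's notebooks then identifies that Lambert series with $\bigl(\sum_{n\ge0}(-1)^{n(n+1)/2}q^{(2n+1)^2/8}\bigr)^4=\tfrac14\theta_2^2(q)\theta_4^2(q)$, after which the singular-value form of the answer is just the definitions $k_p=\theta_2^2/\theta_3^2$ and $K=\tfrac{\pi}{2}\theta_3^2$ at $q=e^{-\pi\sqrt{r/s}}$. No conditional convergence, no $E_2$, and no CM evaluation of a weight-$2$ form is required. Your Eisenstein--Kronecker route could probably be made rigorous, but as written the proposal has a gap precisely where the theorem lives.
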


\begin{proof}  The Jacobi triple product gives
\begin{equation*}
\eta^3(\tau)=\sum_{n=1}^{\infty}n\chi_{-4}(n)e^{\frac{2\pi i n^2
\tau}{8}}.
\end{equation*}
If we write $L(f,2)$ as a real-valued integral, then
\begin{equation*}
L(f,2)=4\pi^2\int_{0}^{\infty}\sum_{n,k\ge1}n k
\chi_{-4}(n)\chi_{-4}(k)e^{-\frac{2\pi r n^2 u}{8}-\frac{2\pi s k^2
u}{8}}u \d u.
\end{equation*}
Applying the involution for the weight $3/2$ theta function
leads to
\begin{equation*}
L(f,2)=\frac{4\pi^2}{\sqrt{s^3}}\int_{0}^{\infty}\sum_{n,k\ge1}n k
\chi_{-4}(n)\chi_{-4}(k)e^{-\frac{2\pi r n^2 u}{8}-\frac{2\pi k^2}{8s u}}\frac{\d u}{\sqrt{u}}.
\end{equation*}
By absolute convergence, we evaluate the integral first using standard results -- which produces a Bessel $K$ function with order $1/2$. Simplifying, we get
\begin{align*}
L(f,2)=&\frac{8\pi^2}{\sqrt{r s^3}}\sum_{n,k\ge 1}k
\chi_{-4}(k)\chi_{-4}(n)q^{\frac{n k}{2}} =\frac{8\pi^2}{\sqrt{r s^3}}\sum_{k\ge 1}\frac{k
\chi_{-4}(k)q^{\frac{k}{2}}}{1+q^k}\\
=&\frac{8\pi^2}{\sqrt{r
s^3}}\biggl(\sum_{k=0}^{\infty}(-1)^{\frac{k(k+1)}{2}}q^{\frac{(2k+1)^2}{8}}\biggr)^4.
\end{align*}
The final Lambert series identity follows from results in
Ramanujan's notebooks \cite{Be3}. The connection with singular values follows from  standard theta function manipulations, leading to
\[ L(f,2) = \frac{2\pi^2}{\sqrt{rs^3}} \theta_2^2(q)\theta_4^2(q), \]
and from the fact that $k_p = \theta_2^2(e^{-\pi \sqrt p})/\theta_3^2(e^{-\pi \sqrt p})$.
\end{proof}

\begin{theorem} The following evaluations are true: \label{thm table}

\begin{equation*}
{ \renewcommand{\arraystretch}{2.25}
    \begin{tabular}{|c|c|}
        \hline
	\vspace{-0.1cm} \boldmath{$f(\tau)$} &  \boldmath{$L(f,2)$} \vspace{-0.1cm}  \\  \hline \hline
      $\eta^6(4\tau)$ & $\displaystyle \frac{\Gamma^4(\frac14)}{64\pi}$\\
        $\eta^3(2\tau)\eta^3(6\tau)$ & $\displaystyle \frac{\Gamma^6(\frac13)}{2^{17/3} \pi^2}$ \\
        $\eta^3(\tau)\eta^3(7\tau)$ & $\displaystyle \frac{
        \Gamma^2(\frac17)\Gamma^2(\frac27)\Gamma^2(\frac47)}{224\pi^2}$\\
        $\eta^3(3\tau)\eta^3(5\tau)\pm\eta^3(\tau)\eta^3(15\tau)$ & $\displaystyle \frac{\Gamma(\frac{1}{15})\Gamma(\frac{2}{15})\Gamma(\frac{4}{15})\Gamma(\frac{8}{15})}{30\sqrt{54\mp 6}\,\pi}$ \\
        \hline
$\displaystyle \frac{\eta^5(4\tau)\eta^5(8\tau)}{\eta^2(2\tau)\eta^2(16\tau)}$ & $\displaystyle \frac{\Gamma^2(\frac18)\Gamma^2(\frac38)}{64\sqrt{2}\pi}$ \\ 
$\displaystyle \frac{\eta^{18}(8\tau)}{\eta^6(4\tau)\eta^6(16\tau)}$ & $\displaystyle \frac{\Gamma^4(\frac14)}{32 \sqrt{2} \pi}$ \\
$\eta^2(\tau)\eta(2\tau)\eta(4\tau)\eta^2(8\tau)$ & $\displaystyle \frac{\Gamma^2(\frac18) \Gamma^2(\frac38)}{192 \pi}$ \\
\hline
    \end{tabular}}
\end{equation*}
\end{theorem}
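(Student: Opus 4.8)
The plan is to sort the seven rows into those that fall directly under Theorem~\ref{weight3thm} and those that need a separate binary-theta identity, and then in both cases to reduce $L(f,2)$ to a singular value of $K$ (or to a classical one-dimensional lattice sum) whose Gamma-function evaluation is already on record in \cite{Be3} and \cite{Zu}.

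The first four rows are, up to linearity of $L$, eta products of the shape $\eta^3(r\tau)\eta^3(s\tau)$ with $r+s\equiv 0\pmod 8$: one has $(r,s)=(4,4)$ for $\eta^6(4\tau)$, $(r,s)=(2,6)$ for $\eta^3(2\tau)\eta^3(6\tau)$, $(r,s)=(1,7)$ for $\eta^3(\tau)\eta^3(7\tau)$, and the combination of $(r,s)=(3,5)$ with $(r,s)=(1,15)$ for $\eta^3(3\tau)\eta^3(5\tau)\pm\eta^3(\tau)\eta^3(15\tau)$. For each I would invoke Theorem~\ref{weight3thm} to get $L(f,2)=\tfrac{8}{\sqrt{rs^3}}\,k_p\,k'_p\,K^2(k_p)$ with $p=r/s$, and then substitute the relevant singular value: $k_1=1/\sqrt2$ with $K(1/\sqrt2)=\Gamma^2(\tfrac14)/(4\sqrt\pi)$ for $(4,4)$; the identities $k_{1/3}k'_{1/3}=k_3k'_3=\tfrac14$ and $K(k_{1/3})=\sqrt3\,K(k_3)$ together with $K(k_3)=3^{1/4}\Gamma^3(\tfrac13)/(2^{7/3}\pi)$ for $(2,6)$; and the singular value $k_7$ with the Chowla--Selberg evaluation of $K(k_7)$ attached to $\mathbb{Q}(\sqrt{-7})$ for $(1,7)$. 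The $\mathbb{Q}(\sqrt{-15})$ row is the subtle one here, because that field has class number two: the individual values $k_{3/5}$ and $k_{1/15}$ are not single Gamma monomials, and it is precisely the eigenform combinations $\eta^3(3\tau)\eta^3(5\tau)\pm\eta^3(\tau)\eta^3(15\tau)$ --- whose $L$-functions are the Hecke $L$-functions of the two genus characters of $\mathbb{Q}(\sqrt{-15})$ --- for which the class-number-two Chowla--Selberg formula produces the clean product $\Gamma(\tfrac1{15})\Gamma(\tfrac2{15})\Gamma(\tfrac4{15})\Gamma(\tfrac8{15})$ and the algebraic factors $\sqrt{54\mp6}$; matching the two signs to the two eigenforms is the delicate bookkeeping.

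The remaining three rows are weight-$3$ eta quotients that are \emph{not} of the shape $\eta^3(r\tau)\eta^3(s\tau)$, and for these I would first establish a binary-theta expansion in exactly the spirit of \eqref{hseries}. I expect $\eta^{18}(8\tau)/(\eta^6(4\tau)\eta^6(16\tau))$ to equal, up to a rational constant, $\sum_{m,n}(-1)^m(2n+1-2im)^2 q^{(2m)^2+(2n+1)^2}$ (the weight-$3$ analogue of the $h$-expansion), and the two $\mathbb{Q}(\sqrt{-2})$ quotients $\eta^5(4\tau)\eta^5(8\tau)/(\eta^2(2\tau)\eta^2(16\tau))$ and $\eta^2(\tau)\eta(2\tau)\eta(4\tau)\eta^2(8\tau)$ to have analogous expansions over binary forms of discriminant $-8$. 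Each such identity can be proved either by checking that both sides are modular forms on a fixed $\Gamma_1(N)$ whose Fourier expansions agree to sufficiently high order and then invoking the valence formula, or by assembling the double sum from Ramanujan's formulas for the derivatives of $\theta_2(q^4)$ and $\theta_4(q^4)$ in \cite[pp.~122--123]{Be3}, just as indicated after \eqref{hproduct}. Writing $L(f,2)$ as the usual real integral $4\pi^2\int_0^\infty f(iu)\,u\,\d u$ and integrating the theta series term by term then yields a lattice sum such as
\[ L(f,2) = \tfrac12\sum_{m,n}\frac{(-1)^m}{(2n+1+2im)^2}, \]
which is only conditionally convergent; summing over $n$ first via $\sum_{n\in\mathbb{Z}}(n+w)^{-2}=\pi^2/\sin^2(\pi w)$ collapses it to a one-dimensional $\cosh^{-2}$-type sum over $m$, a classical theta-constant sum with known evaluation in terms of $\Gamma(\tfrac14)$ --- respectively $\Gamma(\tfrac18)\Gamma(\tfrac38)$ --- again recorded in \cite{Zu}.

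The main obstacle I anticipate is producing and rigorously justifying those three binary-theta expansions: as the authors already note for the analogous \eqref{hseries}, this step is ``slightly tedious'', since one must identify the correct congruence subgroup, bound how many Fourier coefficients the valence formula forces one to verify, and track the overall constant carefully through the $\theta$-derivative manipulations. The only other genuine difficulty is the class-number-two accounting for $\mathbb{Q}(\sqrt{-15})$ flagged above; once the theta expansions and the correct Chowla--Selberg data are in hand, the rest is routine substitution into the singular-value and lattice-sum tables of \cite{Be3} and \cite{Zu}.
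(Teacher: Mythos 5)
Your handling of the first four rows coincides with the paper's: invoke Theorem~\ref{weight3thm} and substitute the tabulated singular values from \cite{Be3} and \cite{Zu}. One small simplification: since the theorem delivers $L(f,2)$ in terms of $k_{r/s}k'_{r/s}K^2(k_{r/s})$ (note the \emph{square} of $K$), the class-number-two subtlety you flag for $\mathbb{Q}(\sqrt{-15})$ largely evaporates --- each of $K^2(k_{3/5})$ and $K^2(k_{1/15})$ is individually an algebraic multiple of $\Gamma(\frac{1}{15})\Gamma(\frac{2}{15})\Gamma(\frac{4}{15})\Gamma(\frac{8}{15})/\pi^2$ and is on record, so one just adds or subtracts the two evaluations with no eigenform or genus-character bookkeeping. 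For the last three rows you take a genuinely different route from the paper. The paper proves \emph{no} binary theta expansions there; it instead converts each $L(f,2)$ into a moment of a single $K$, namely $\frac{1}{4\sqrt2}\int_0^1 K(k)\,(k(1-k))^{-1/2}\,\d k$, $\frac18\int_0^1 K(k)\,k^{-1/2}(1-k^2)^{-3/4}\,\d k$ and $\frac{1}{2\sqrt2}\int_0^1 K(k)\,(1+k)^{-1/2}\,\d k$, evaluating the first two by series interchange and by \cite[Eqn.~(8)]{wan}. The third requires an extra idea: that integral alone gives only $\frac{\Gamma^2(\frac18)\Gamma^2(\frac38)}{16\pi}$ minus a $_4F_3$ (equation \eqref{i1int}), and the paper kills the hypergeometric term by comparing with the companion integral $I_2=\int_0^1 K(x)(x(1+x))^{-1/2}\,\d x$ and using multiplicativity of the Fourier coefficients of $f$ to prove $I_2=2I_1$. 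Your plan --- binary theta expansion, term-by-term integration, inner summation over $n$ via $\sum_n(n+w)^{-2}=\pi^2/\sin^2(\pi w)$, then a $\operatorname{sech}^2$-type one-dimensional sum --- is viable and treats all three rows uniformly (the paper itself records the corresponding lattice sums in the remark that follows), but it carries the real cost of proving three identities of the type \eqref{hseries} and of justifying the rearrangement of conditionally convergent double sums; moreover the resulting $\operatorname{sech}^2$ sums do not factor into Dirichlet $L$-functions, so they are not in the class treated in \cite{zucker} --- they must instead be matched to Lambert-series/theta-constant evaluations at singular values via \cite{Be3} and \cite{Zu}. None of this is fatal, but it is precisely the work the paper's integral route avoids.
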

\begin{proof} For the first four entries in the table we use Theorem \ref{weight3thm}. The singular values $k_p$, as well as $K(k_p)$, are well tabulated in \cite{Be3} and \cite{Zu}. For instance, for the third entry we need 
\[ k_7 = \frac{\sqrt{2}(3-\sqrt 7)}{8}, \quad K(k_7) = \frac{\Gamma(\frac17)\Gamma(\frac27)\Gamma(\frac47)}{4\sqrt[4]{7}\pi}. \]
In general, $k_p$ is algebraic and $K(k_p)$ involves only algebraic numbers and Gamma functions. \\

The last three entries are weight 3 cusp forms in Martin's list \cite{Ma}. For the third last one, we convert it to the following integral,
\[ L(f,2)=\frac{1}{4\sqrt{2}}\int_{0}^{1}\frac{K(k)}{\sqrt{k(1-k)}}\d k, \]
which can be evaluated using the series representation of $K$ and interchanging the order of integration and summation. 

For the second last entry, we have
\[L(f,2)=\frac{1}{8}\int_{0}^{1}\frac{K(k)}{\sqrt{k}(1-k^2)^{3/4}}\d k. \]
The evaluation of this integral follows from \cite[Eqn.~(8)]{wan}. Indeed, many integrals over a single $K$ have closed forms, and the two integrals we just evaluated can also be done by a computer algebra system.

For the last entry in the table, we obtain
\[ L(f,2)=\frac{1}{2\sqrt{2}}\int_{0}^{1}\frac{K(k)}{\sqrt{1+k}}\d k. \]
Denoting the integral by $I_1$, we use the moments of $K'$ found in \cite{wan} and a quadratic transformation to produce
\begin{equation}\label{i1int}
I_1 = \frac{1}{\sqrt2} \int_0^1 \frac{K'(x)}{\sqrt{1+x}}\mathrm dx = \frac{1}{2\sqrt2} \biggl[\frac{\Gamma^2(\frac18)\Gamma^2(\frac38)}{16\pi}-{}_4F_3\biggl({{\frac34,1,1,\frac54}\atop{\frac32,\frac32,\frac32}};1\biggr)\biggr].
\end{equation}
Similarly, with the auxiliary integral $I_2 :=\int_0^1 K(x)/\sqrt{x(1+x)}\, \mathrm dx$, we have
\begin{equation}\label{i2int}
I_2 = \frac{1}{\sqrt2} \int_0^1 \frac{K'(x)}{\sqrt{1-x}}\mathrm dx = \frac{1}{2\sqrt2} \biggl[\frac{\Gamma^2(\frac18)\Gamma^2(\frac38)}{16\pi}+{}_4F_3\biggl({{\frac34,1,1,\frac54}\atop{\frac32,\frac32,\frac32}};1\biggr)\biggr].
\end{equation}
Experimentally it is observed that $I_2=2I_1$, which can be shown as follows.

We know that $I_1/(2\sqrt2) = L(f,2)$. On the other hand, it is readily verifiable that $I_2/(2\sqrt2)=L(f_0,2)$, where $f_0(q):=-f(-q)$. Consequently, by looking at the $q$-expansion of $f_0-f$ and using the fact that the coefficients of $f$ are multiplicative, we deduce that $I_2-I_1=I_1$. The desired evaluation of $I_1$ now follows by combining \eqref{i1int} and \eqref{i2int}, and inter alia we also obtain a closed form for the $_4F_3$.
\end{proof}

\begin{remark}
Here we retain the notation used in the last part of the proof above. Firstly, with $f(\tau) = \eta^2(\tau)\eta(2\tau)\eta(4\tau)\eta^2(8\tau)$, it is true that
\[ f(\tau) = \frac12 \sum_{(n,m) \ne (0,0) } (m^2-2n^2)q^{m^2+2n^2}.\] 
Secondly, it is known that the series \eqref{L-series definition} for the $L$-value of a weight $k$ cusp form converges conditionally for some $s < (k+1)/2$. 
In particular, the series for $L(f,2)$ converges, and so by looking at the partial sums, we have
\[ I_1 = \sqrt{2} \, \sum_{(m, n) \ne (0,0)} \frac{m^2-2n^2}{(m^2+2n^2)^2}, \]
where we sum over expanding ellipses $m^2+2n^2 \le M, \, M \to \infty$. Similarly,
\[ I_2 = \sqrt{2} \, \sum_{(m, n) \ne (0,0)} (-1)^{m+1} \frac{m^2-2n^2}{(m^2+2n^2)^2}. \]
Subtracting the sums gives another proof that $I_2-I_1=I_1$. Since the sum for $I_2$ has better convergence properties (it can be summed over expanding rectangles), we will deal exclusively with alternating versions of the lattice sums we encounter. Note that we can decompose $f$ into weight $3/2$ theta functions $(\eta^2(\tau)\eta^2(4\tau)/\eta(2\tau)) \cdot (\eta^2(2\tau)\eta^2(8\tau)/\eta(4\tau))$ and therefore obtain a different series expansion. Such expansions imply the alternating sum identities
\begin{equation}
\sum_{(m, n) \ne (0,0)} \frac{(-1)^{m+1} (m^2-2n^2)}{(m^2+2n^2)^2} = \sum_{m,n} \frac{18(-1)^m(3m+1)(3n+1)}{((3m+1)^2+2(3n+1)^2)^2}= \frac{\Gamma^2(\frac18)\Gamma^2(\frac38)}{48\pi}.
\end{equation} \qede
\end{remark}

\begin{example}
The first entry in Theorem \ref{thm table} ($f(\tau) = \eta^6(4\tau)$) corresponds to the result 
\begin{equation} \label{table1st}  
L(f,2) = \frac14 \int_0^1 \frac{K(k)}{\sqrt{1-k^2}}\mathrm{d}k = \frac{\Gamma^4(\frac14)}{64\pi},
\end{equation}
and may be expressed as a lattice sum using a binary theta-expansion of $f$:
\begin{equation}\sum_{(m, n) \ne (0,0)} (-1)^{m+1}\frac{m^2-4n^2}{(m^2+4n^2)^2} = \frac{\Gamma^4(\frac14)}{32\pi}. \label{lattice4} \end{equation}
Combined with well-known lattice sum evaluations \cite{zucker}, consequences of \eqref{lattice4} include
\[ \sum_{(m, n) \ne (0,0)} \frac{(-1)^m n^2}{(m^2+4n^2)^2} = \frac{\Gamma^4(\frac14)}{256\pi}-\frac{3\pi\log2}{32}, \ \sum_{(m, n) \ne (0,0)} \frac{(-1)^{m+1} m^2}{(m^2+4n^2)^2} = \frac{\Gamma^4(\frac14)}{64\pi}+\frac{3\pi\log2}{8}.\]

On the other hand,
\[ L(f,1) = \frac{1}{\pi} \int_0^1 \frac{K(k)}{\sqrt{1-k^2}}\mathrm{d}k = \frac{\Gamma^4(\frac14)}{16\pi^2}, \]
which is consistent with the functional equation satisfied by $L(f,s)$. 

We note that \eqref{table1st} actually is a specialization of \cite[Eqn. after (5.14)]{duke}, which states
\[ \sum_{n=1}^\infty \frac{a_n}{n^2} q^{n/4} = \frac{\pi \sqrt k}{4K(k)}\,_3F_2\biggl({{\frac34,\frac34,1}\atop{\frac54,\frac54}};k^2\biggr), \]
where $a_n$ are the coefficients in the $q$-expansion of $f$, and as usual $k=\theta_2^2(q)/\theta_3^2(q)$; taking the limit $k \to 1^-$ and appealing to the Stolz-Ces\`aro theorem recovers \eqref{table1st}. \qede
\end{example}

\begin{example}
The second entry in Theorem \ref{thm table} ($f(\tau) = \eta^3(2\tau)\eta^3(6\tau)$) gives the non-trivial integral evaluation
\begin{equation} \label{Kcubict}
L(f,2) = \int_0^1 (3+6p)^{-\frac12} K\biggl(\frac{p^{\frac32}(2+p)^{\frac12}}{(1+2p)^{\frac12}}\biggr)\mathrm{d}p = \frac{\Gamma^6(\frac13)}{2^{\frac{17}3} \pi^2}, \end{equation}
where we have used the parametrization of the degree 3 modular equation and multiplier \cite[Ch.~19]{Be3}. We also have $L(f,1) = \sqrt{3}/\pi \, L(f,2)$, which can be shown from \eqref{Kcubict} either by the functional equation or by using a cubic transformation. We note that \eqref{Kcubict} (after a change of variable) appears in a very different context in \cite[Sec.~3]{walk2}.

The lattice sum associated with \eqref{Kcubict} is
\begin{equation} \sum_{(m, n) \ne (0,0)} (-1)^{m+n+1}\frac{m^2-3n^2}{(m^2+3n^2)^2} = \frac{\Gamma^6(\frac13)}{2^{\frac{14}3} \pi^2}. \end{equation}

For the third entry in Theorem \ref{thm table}, we do not seem to obtain a reasonably concise integral involving $K$ (due to the apparent lack of a parametrization for the degree 7 modular equation). Using the  binary theta function for $\eta^3(\tau)\eta^3(7\tau)$ \cite{chan}, we obtain the sum
\begin{equation}
\sum_{(m, n) \ne (0,0)} \frac{(-1)^{m}(2n^2-m^2)}{(m^2+mn+2n^2)^2} = \frac{\Gamma^2(\frac17)\Gamma^2(\frac27)\Gamma^2(\frac47)}{56\pi^2}.
\end{equation} \qede
\end{example}

\begin{example}
An $L$-value of the function $g$ used in the proof of \eqref{James formula} also gives some interesting lattice sum evaluations. Using the closed form for $L(g,3)$, the multiplicativity of the coefficients of $g$, and results in \cite{zucker}, we deduce
\begin{align} \nonumber
\sum_{(m, n) \ne (0,0)}\frac{(-1)^{m+n}m^2n^2}{(m^2+n^2)^3} & = \frac{\Gamma^8(\frac14)}{2^9 \,3\, \pi^3}-\frac{\pi \log 2}{8}, \\ 
\sum_{(m, n) \ne (0,0)}\frac{(-1)^{m+n}m^4}{(m^2+n^2)^3} & = -\frac{\Gamma^8(\frac14)}{2^9 \,3\, \pi^3}-\frac{3\pi \log 2}{8}, \\ \nonumber
\sum_{(m, n) \ne (0,0)}\frac{(-1)^{m}m^2n^2}{(m^2+n^2)^3} & = -\frac{\Gamma^8(\frac14)}{2^{10} \,3\, \pi^3}-\frac{\pi \log 2}{16}.
\end{align} \qede
\end{example}

\section{Even weight cases and conclusion}

For \textit{even weight} cusp forms we do not seem to obtain formulas for the $L$-values in terms of Gamma functions; instead hypergeometric functions are involved. Consider the following pair of weight $4$ examples:
\begin{equation*}
f_1(\tau)=\frac{\eta^{16}(4\tau)}{\eta^4(2\tau)\eta^4(8\tau)},\qquad \qquad f_2(\tau)=\eta^4(2\tau)\eta^4(4\tau).
\end{equation*}
All of the critical $L$-values of $f_1$ and $f_2$ reduce to special values of hypergeometric functions.  Furthermore, there are some very curious relations \textit{between} the $L$-values of both cusp forms:
\begin{equation}
\begin{split}
L(f_1,3)=\frac{\pi}{2}L(f_{2},2)= \frac{\pi^2}{8} L(f_1,1)&=\frac{1}{8}\int_{0}^{1}\frac{K(k)^2}{\sqrt{1-k^2}}\d k\\
 &= \frac{\pi^3}{32}\,{_4F_3}\biggl({{\frac12,\frac12,\frac12,\frac12}\atop{1,1,1}};1\biggr).
\end{split}
\end{equation}
The last equality follows from \cite[Eqn. 35]{wan}.  Similarly we have  
\begin{equation}
\begin{split}
L(f_2,3)=\frac{\pi}{4}L(f_1,2)=\frac{\pi^2}{4}L(f_2,1)= &  \frac18 \int_0^1 \frac{K(k)K'(k)}{\sqrt{1-k^2}}\d k=\frac14 \int_{0}^{1}K(k)^2 \d k \\
=&\frac{\pi^4}{128}\,{_7F_6}\biggl({{\frac54,\frac12,\frac12,\frac12,\frac12,\frac12,\frac12}\atop{\frac14,1,1,1,1,1}};1\biggr). \label{intk2}  
\end{split}
\end{equation}
The  second last equality comes from \cite[Sec.~5.4]{wan}, while the hypergeometric evaluation follows from \cite[Eqns.~3 and 18]{wan}.  Curiously this last integral also appears in connection with random walks \cite[Sec.~3]{walk2}.

\subsection{Conclusion}

As developed by Manin and Shimua (see e.\,g.~\cite[Thm.~1]{shimura2}), and also recorded in \cite[Sec.~3.4]{KZ},  the critical $L$-values of a modular form $f$ satisfy the following property: the ratios $L(f,2k)/L(f,2k-2)$ and $L(f,2k+1)/L(f,2k-1)$ can all be expressed in terms of algebraic (often rational) numbers and powers of $\pi$, where $s$ is the weight and $k=1,2,\ldots$. When the weight $s$ is odd, the functional equation relating $L(f,k)$ to $L(f,s-k)$ then implies that \textit{all} the critical $L$-values of $f$ are related by constants of said type. The explicit algebraic numbers involved may be found using Rankin's method, as explained in \cite{shimura1}, though the computation is highly non-trivial and tedious.

It was simple to directly verify this property for $f_1$ and $f_2$ studied above. On the other hand, more effort and ad~hoc strategies were needed to show that for the weight 5 cusp forms $g$ and $h$, all the critical $L$-values are related by rational multiples and powers of $\pi$. (Nevertheless, we believe these approaches are yet easier than Rankin's method for the cusp forms concerned.) It would be valuable to find a general and more approachable method for computing the ratio of two critical $L$-values.

We conclude with two more observations and directions for further research.

\begin{enumerate}
\item It seems that the critical $L$-values of some even weight cusp forms can be expressed as hypergeometric functions, while those of odd weight cusp forms can often be expressed in terms of Gamma functions. It would be interesting to explain this discrepancy, because the Ramanujan zeta function, and various other interesting zeta functions, are attached to such cusp forms (see e.\,g.~\cite{Rg2}). It would also be illuminating to see if  many other critical $L$-values of odd weight cusp forms evaluate in terms of Gamma functions.

\item As shown above, $L(f_1,s)/L(f_2,s-1)/\pi$ is a rational number. (Note that $f_2$ is $f_1$ twisted by the non-trivial Dirichlet character of conductor 4.) Are there other pairs of even weight modular forms with the same property? If so, is there a method to find, given one function in the pair, the other function?
\end{enumerate}

\medskip

\acknowledgements We would like to thank Wadim Zudilin for extremely helpful feedback, and for pointing out the references \cite{duke}, \cite{shimura1} and \cite{shimura2}. We also thank Yajun Zhou for showing us a proof of equation \eqref{old13}.


\begin{thebibliography}{99}

%

\bibitem{combat}
\textsc{D.\,H.~Bailey and J.\,M.~Borwein}, Hand-to-hand combat with multi-thousand-digit integrals,
\emph{J. Computational Science} \textbf{3} (2012), 77--86.

\bibitem{Be3}
\textsc{B.\,C.~Berndt},
\emph{Ramanujan's Notebooks, Part III}
(Springer-Verlag, New York, 1991).




\bibitem{walk2}
\textsc{J.\,M.~Borwein, A.~Straub} and \textsc{J.~Wan}, Three-step and four-step random walk integrals,
\emph{Experiment. Math.} \textbf{22} (2013), 1--14.

\bibitem{chan}
\textsc{H.\,H.~Chan, S.~Cooper} and \textsc{W.-C.~Liaw}, On $\eta^3(a\tau)\eta^3(b\tau)$ with $a+b=8$,
\emph{J. Aust. Math. Soc.} \textbf{84} (2008), 301--313.

\bibitem{duke}
\textsc{W.~Duke}, Some entries in Ramanujan's notebooks,
\emph{Math. Proc. Camb. Phil. Soc.} \textbf{144} (2008), 255--266.

\bibitem{glaisher}
\textsc{J.\,W.\,L.~Glaisher}, On the representation of a number as sum of two, four, six, eight, ten, and twelve squares,
\emph{Quart. J. Math.} \textbf{38} (1907), 1--62.

\bibitem{hur}
\textsc{A.~Hurwitz}, \"Uber die Entwicklungskoeffizienten der lemniskatischen Funktionen,
\emph{Math. Ann.} \textbf{51} (1899), 196--226.

\bibitem{KZ}
\textsc{M.~Kontsevich} and \textsc{D.~Zagier}, Periods, in:
\emph{Mathematics unlimited\,---\,2001 and beyond} (Springer,
Berlin, 2001), 771--808.

\bibitem{Ma}
\textsc{Y.~Martin}, Multiplicative eta quotients, \emph{Trans. Amer.
Math. Soc.} \textbf{348} (1996), no.~12, 4825--4856.


\bibitem{Rg}
\textsc{M.~Rogers}, Hypergeometric formulas for lattice sums and
Mahler measures, \emph{Intern. Math. Res. Not.} (2011), no.~17,
4027--4058.

\bibitem{Rg2}
\textsc{M.~Rogers}, Identities for the Ramanujan zeta function,
preprint (2013), 7 pages.

\bibitem{RZ2} \textsc{M.~Rogers} and \textsc{W.~Zudilin}, {}On the
Mahler measure of $1+X+1/X+Y+1/Y$, preprint \texttt{arXiv:\,1102.1153 [math.NT]} (2011).

\bibitem{shimura1}
\textsc{G.~Shimura},
The special values of the zeta functions associated with cusp forms, \textit{Comm. Pure Appl. Math.} \textbf{29} (1976),  no.~6, 783--804.

\bibitem{shimura2}
\textsc{G.~Shimura},
On the periods of modular forms, \textit{Math. Ann.} \textbf{229} (1977),  no.~3, 211--221.

\bibitem{so2}
\textsc{M.~Somos},
Dedekind eta function product identities,
available at \url{http://eta.math.georgetown.edu/}.

\bibitem{wan}
\textsc{J.\,G.~Wan},
Moments of products of elliptic integrals, \textit{Adv. Appl. Math.} \textbf{48} (2012),  121--141.

\bibitem{zhou} 
\textsc{Y.~Zhou}, 
Legendre Functions, Spherical Rotations, and Multiple Elliptic Integrals, 
preprint \texttt{arXiv:\,1301.1735 [math.CA]} (2013).

\bibitem{zucker}
\textsc{I.\,J.~Zucker}, Exact results for some lattice sums in 2, 4, 6 and 8 dimensions,
\emph{J. Phys. A: Math. Nucl. Gen.} \textbf{7} (1974), no.~13, 1568--1575.

\bibitem{Zu}
\textsc{I.\,J.~Zucker}, The evaluation in terms of $\Gamma
$-functions of the periods of elliptic curves admitting complex
multiplication, \emph{Math. Proc. Camb. Philos. Soc.}
\textbf{82} (1977), no.~1, 111--118.

\end{thebibliography}
\end{document}